\documentclass[11pt,onecolumn]{article}
\usepackage{amsmath,amsthm,amsfonts}
\usepackage[bottom=1.3in]{geometry}             
\usepackage[numbers,sort]{natbib}
\usepackage{authblk}
\usepackage{graphicx}
\usepackage{fancyhdr}
\usepackage{amssymb}
\usepackage{amsmath}
\usepackage{epstopdf}
\usepackage[all,cmtip]{xy}
\usepackage[pagebackref,colorlinks,
    linkcolor={red!70!black},
    citecolor={green!60!black},
    urlcolor={blue!80!black}]{hyperref}
\usepackage[capitalize]{cleveref}
\DeclareGraphicsRule{.tif}{png}{.png}{`convert #1 `dirname #1`/`basename See #1 .tif`.png}
\usepackage{stmaryrd}
\usepackage{stackengine}
\usepackage{enumitem}
\usepackage{mathtools}
\usepackage{subcaption}
\usepackage{doi}
\usepackage{tikz}
\usepackage[textsize=footnotesize]{todonotes}
\usepackage{mathrsfs}
\usepackage{backref}
\usepackage[toctitles]{titlesec}

\newcommand\barbelow[1]{\stackunder[1.5pt]{$\hspace{2pt}#1\hspace{2pt}$}{\rule{1ex}{0.075ex}}}

\newcommand{\barabove}[1]{\hspace{2pt}\bar{#1}\hspace{2pt}}
\newcommand{\ot}{\hspace{2pt} \check\otimes\hspace{2pt}}
\theoremstyle{plain}
\newtheorem{theorem}{Theorem}[section]

\newtheorem{corollary}[theorem]{Corollary}
\newtheorem{proposition}[theorem]{Proposition}
\theoremstyle{remark}

\theoremstyle{definition}
\newtheorem{definition}[theorem]{Definition}

\usepackage{makecell}

\usepackage{arydshln}


\title{Self-Dual Cone Systems and Tensor Products}
 
 \author{Tim Netzer}
\affil{\small Department of Mathematics, University of Innsbruck, Austria}
\date{\today}                      
\begin{document}
\maketitle

\begin{abstract} We prove the existence of self-dual tensor products for finite-dimensional convex cones and operator systems. This is   a   consequence of a  more general  result: Every cone system, which is contained in its dual, can be enlarged to a self-dual cone system. Using the  setup of cone systems, we  further describe how all functorial tensor products of finite-dimensional cones and operator systems explicitly arise from the minimal and maximal tensor product.
\end{abstract} 

\section{Introduction and Preliminaries}

The algebraic tensor product of vector spaces is a classical and well-studied construction, that appears in almost all areas of pure and applied mathematics. But when the spaces are equipped with more structure, and the tensor product shall receive  a similar structure, the construction can suddenly become  much more complicated. In particular, there is often more than just one possible tensor product. This happens for example when studying normed spaces, operator spaces, ordered spaces or operator systems. Tensor products of Banach spaces have already been studied by Grothendieck in the 1950s, and have been extended in various directions, for example to topological vector spaces and  operator spaces. We refer to \cite{ryan, pis} for very comprehensive treatments. Tensor products of ordered spaces (or of convex cones) have been studied at least since the 1960s, motivated by questions from functional analysis, optimization, and quantum theory, among others. We refer to the very nice memoir \cite{debruyn} for more details and references. The study of tensor products of operator systems was initiated in \cite{kavruk} and has also developed quite a lot since \cite{pauten, ka1, ka2, ka3, fa}.

One common phenomenon in all of these different cases is the existence of a smallest and a largest tensor product $\barbelow\otimes$ and $\barabove\otimes$ (be aware that which one is called small or large depends on the context and author), which are both well-studied. Usually they have additional nice properties, like symmetry, associativity, functoriality... They are also dual to each other, in the sense that when duality distributes over the product, it turns one into the other: $$(C\barbelow\otimes D)^\vee= C^\vee\barabove\otimes D^\vee\quad \mbox{and}\quad  (C\barabove\otimes D)^\vee= C^\vee\barbelow\otimes D^\vee.$$ In theoretical quantum physics, the minimal tensor product of cones of positive semidefinite matrices contains the so-called \emph{separable bipartite states}: $${\rm Psd}_m(\mathbb C)\barbelow\otimes {\rm Psd}_n(\mathbb C)=\left\{ \sum_{i=1}^d A_i\otimes B_i\mid d\geqslant 1, A_i\in{\rm Psd}_m(\mathbb C), B_i\in{\rm Psd}_n(\mathbb C)\right\}.$$ The maximal tensor product contains the  so-called \emph{bipartite block-positive matrices} (which also correspond to positive maps): \begin{align*} {\rm Psd}_m(\mathbb C)\barabove\otimes {\rm Psd}_n(\mathbb C)=\left\{ X\in{\rm Her}_{m}(\mathbb C)\otimes{\rm Her}_n(\mathbb C)\mid (v\otimes w)^*X(v\otimes w)\geqslant 0\  \forall v\in\mathbb C^m, w\in\mathbb C^n\right\}.\end{align*}
Now one crucial observation is the following: the most important convex cone used to describe bipartite quantum systems is \emph{neither of the two}, but a convex cone in between, namely the full cone of positive semidefinite matrices of the corresponding product dimension, containing the  \emph{bipartite states}: $${\rm Psd}_{mn}(\mathbb C)\subseteq {\rm Her}_{mn}(\mathbb C)={\rm Her}_{m}(\mathbb C)\otimes{\rm Her}_n(\mathbb C).$$ This convex cone really lies exactly in the middle of the minimal and maximal tensor product, in the sense that it is self-dual, whereas the minimal and maximal tensor product cones are dual to each other. So if one defines a new tensor product of psd-cones by just setting  $$ {\rm Psd}_m(\mathbb C)\otimes {\rm Psd}_n(\mathbb C)\coloneqq {\rm Psd}_{mn}(\mathbb C),$$ this is a self-dual tensor product in the following sense: $$ \left({\rm Psd}_m(\mathbb C)\otimes {\rm Psd}_n(\mathbb C)\right)^\vee = {\rm Psd}_m(\mathbb C)^\vee\otimes {\rm Psd}_n(\mathbb C)^\vee.$$
\emph{Generalized probabilistic theories} in quantum physics, initiated in \cite{ma, se}, replace the psd-cones  by other cones, to examine which of the properties of psd-cones are actually necessary for the physical theory (see  \cite{aub, au2, pla} for detailed information and modern results). However, the states are then usually taken as elements from the maximal tensor product, most likely since no other nice tensor product seems to be available for general convex cones. In this sense it is not a true generalization of classical quantum physics, which takes the \emph{tensor product in the middle} as its set of states. 

In this paper we prove  that there exists a self-dual functorial tensor product on the category of all proper finite-dimensional convex cones (\Cref{cor:ctp}). The same is  also true for finite-dimensional  abstract operator systems (\Cref{cor:otp}).

It turns out that these results follow from a  theorem  on abstract cone systems, a concept recently introduced in \cite{vn}. As main general result in the present paper we prove that every abstract cone system, which is contained in its dual, can be enlarged to a self-dual cone system (\Cref{thm:dual}).  Since each tensor product of cones and  of operator systems can be understood  as a certain cone system, and the minimal tensor product is contained in its dual, the maximal one, the existence of a self-dual tensor product follows immediately. Furthermore, since each cone system has a so-called \emph{concrete realization}, we also obtain a concrete description of all functorial tensor products of finite-dimensional cones and operator systems. This is our second main result, \Cref{thm:ctp}. Actually, all these  results hold for any categories of cone systems, the category of finite dimensional convex cones and of finite dimensional operator systems are just two specific examples.

Let us only give a very brief introduction to cone systems, wer refer to \cite{vn} for details, proofs  and many more examples. The notion  was developed to generalize methods and results from the theory of operator systems, and the present paper is an additional justification for doing so. 

We first need a $*$-autonomous category $\tt C$ and a $*$-autonomous functor $\mathcal S\colon {\tt C}\to {\tt FVec}$, called a \emph{stem}. Here, ${\tt FVec}$ denotes the category of finite-dimensional real vector spaces and linear maps. Note that $*$-autonomy is not important to understand the concepts, but it is the crucial property that makes most of the classical results from operator systems theory hold.  Now given some $X\in{\tt FVec}$, an \emph{$\mathcal S$-system on $X$} is a functor $\mathcal G\colon {\tt C}\to {\tt FCone}$, where ${\tt FCone} $ is the category of finite-dimensional closed convex cones and positive maps, such that the following diagram commutes:
$$ \xymatrix{{\tt FVec} \ar[r]^{X\otimes\cdot} & {\tt FVec}\\ {\tt C} \ar[u]^{\mathcal S} \ar[r]^{\mathcal G}& {\tt FCone}\ar[u]_{\mathcal F}}$$ Here $\mathcal F$ is the functor that maps each cone to its underlying space. In words, for every object $c\in {\tt C}$, which should be seen as  the label for  a \emph{level} of the system, $\mathcal G$ assigns a closed convex cone $\mathcal G(c)$ in the space $X\otimes\mathcal S(c)$. These different cones are connected via maps coming from ${\tt C}$, i.e. if $\varphi\in {\tt C}(c,d)$ is a morphism in ${\tt C}$ from $c$ to $d$, then the linear map $${\rm id}_X\otimes\mathcal S(\varphi)\colon X\otimes\mathcal S(c)\to X\otimes \mathcal S(d)$$ is positive, i.e.\ maps $\mathcal G(c)$ into $\mathcal G(d)$.   We will sometimes write $\mathcal G\subseteq\mathcal H$ for $\mathcal S$-systems on $X$, and mean that inclusion holds level-wise, i.e.\ $\mathcal G(c)\subseteq\mathcal H(c)$ for all $c\in{\tt C}$.

When choosing  as ${\tt C}$  the category  ${\tt FHilb}^{\rm op}$ of finite-dimensional Hilbert spaces (opposite to make the following functor covariant),  and the \emph{operator stem } \begin{align*} \mathcal O\colon {\tt FHilb}^{\rm op} & \to {\tt FVec} \\ H & \mapsto \mathbb B(H)_{\rm her} \\ V & \mapsto V^*\cdot V
\end{align*} one recovers the concept  of  abstract operator systems on finite-dimensional spaces. When choosing ${\tt C}={\tt SCone},$ the category of finite-dimensional simplex cones, and as stem  the functor $\mathcal F$ that maps each cone to its underlying space, an $\mathcal F$-system $\mathcal G$ is uniquely defined by the first level cone $\mathcal G(\mathbb R_{\geqslant 0})$, we thus recover the concept of finite-dimensional closed convex cones. Below we will see several more examples of stems.

For a given stem, each of the spaces $\mathcal S(c)$ comes equipped with an \emph{intrinsic} and a \emph{cointrinsic cone}, defined as 
\begin{align*}
A(c)& \coloneqq \{ v\in \mathcal S(c)\mid \mathcal S(\varphi)(v)\geqslant 0 \ \forall \varphi\in {\tt C}(c, \mathbf 1^*)\} \\ B(c) & \coloneqq \overline{\rm cone}\{ \mathcal S(\Phi)(1)\mid \Phi\in {\tt C}(\mathbf 1,c)\}.
\end{align*}
Quite often these two cones coincide, for the operator stem we for example obtain the cone of positive semidefinite operators $A(H)=B(H)={\rm Psd}(H)$ for each $H\in{\tt FHilb}^{\rm op}$.
We further define  the  so-called \emph{intrinsic} and \emph{cointrinsic} $\mathcal S$-systems on the space $\mathcal S(d)$  as $$\mathcal A_d(c)\coloneqq A(d\otimes c), \ \mathcal B_d(c)\coloneqq B(d\ot c).$$ For the operator stem, $\mathcal A_H=\mathcal B_H$ is precisely the usual operator system structure  on the space ${\mathbb B}(H)_{\rm her}$ of Hermitian operators.  

If $\mathcal G,\mathcal H$ are $\mathcal S$-systems on $X$ and $Y$ respectively, a linear map $\psi\colon X\to Y$ is called \emph{completely positive} w.r.t.\ $\mathcal G,\mathcal H$, if $\psi\otimes {\rm id}_{\mathcal S(c)}$ maps $\mathcal G(c)$ into $\mathcal H(c)$, for each $c\in {\tt C}$. 
If $\psi\colon X\to \mathcal S(d)$ is a linear map, the largest system on $X$ that makes $\psi$ completely positive into $\mathcal A_d$ is said to be \emph{concretely realized by $\psi$}. Every $\mathcal S$-system is a (maybe infinite) intersection of systems with a concrete realization.  Every $\mathcal S$-system also has a \emph{dual}, defined as the level-wise dual: $\mathcal G^\vee(c)\coloneqq \mathcal G(c^*)^\vee.$
Finitely many stems $\mathcal S_i\colon{\tt C}_i\to {\tt FVec}$ can be  be combined to a new stem:\begin{align*}\mathcal S_1\otimes\cdots\otimes\mathcal S_d\colon {\tt C}_1\times\cdots\times{\tt C}_d& \to {\tt FVec} \\  (c_1,\ldots, c_d)&\mapsto \mathcal S_1(c_1)\otimes\cdots \mathcal  \otimes S_d(c_d).\end{align*}
For fixed a stem $\mathcal S$ with $A(c)\barbelow\otimes A(d)\subseteq A(c\otimes d)$ for all $c,d\in{\tt C}$, the category ${\tt Sys}(\mathcal S)$ of abstract $\mathcal S$-systems, with completely positive maps, duality  and the minimal tensor product, is again $*$-autonomous, and the functor $\mathcal S'\colon {\tt Sys}(\mathcal S)\to {\tt FVec}$ that maps each system $(\mathcal G,X)$ to its underlying space $X$, is again a stem. 

\section{Self-Dual Cone Systems}\label{sec:sd}

In this section we prove our first main result, \Cref{thm:dual} below. Throughout the rest of this paper, let always $\mathcal S\colon{\tt C}\to {\tt FVec}$  be a stem such that all intrinsic cones $A(c)$ are proper (i.e.\ closed, sharp and with nonempty interior), which is equivalent to all $B(c)$ being proper  \cite[Lemma 3.5]{vn}.
Let $X$ be a fixed finite-dimensional real vector space. We identify $X$ with its dual space $X'$, by choosing a basis $\underline x=(x_1,\ldots, x_d)$ of $X$, denoting the dual basis of $X'$ by $\underline x'=(x_1',\ldots, x_d')$, and using the isomorphism that maps $x_i$ to $x_i'$. This is the same as taking the inner product on $X$ that makes $\underline x$ an orthonormal basis, and using this inner product to identify $X$ and $X'$ in the usual way. However, for ease of notation, we will suppress the isomorphism and just assume $X=X'$. This allows us to compare an $\mathcal S$-system $\mathcal G$ on $X$ with its  dual system $\mathcal G^\vee$ on $X',$ and call such a system \emph{self-dual} if $\mathcal G=\mathcal G^\vee$. However, as often with questions of duality, be aware that this is not basis independent.

In  \cite{ba} it was proven that each finite-dimensional convex cone, which is contained in its dual, can be enlarged to a self-dual cone. When trying to repeat this proof  for operator systems, one already encounters a significant problem. Whereas for $(r_1,\ldots, r_d)\in\mathbb R^d$ we always have $r_1^2+\cdots +r_d^2\geqslant 0$, for $(M_1,\ldots, M_d)\in {\rm Her}_n(\mathbb C)$, the matrix $$M_1\otimes M_1+\cdots + M_d\otimes M_d\in {\rm Her}_{n^2}(\mathbb C)$$ will in general not be positive semidefinite. But this would be needed for a verbatim proof as in \cite{ba}. To deal with this problem,  for $c\in \tt C$ we consider the  set $$\mathcal P(c)\coloneqq \left\{ x\in X\otimes \mathcal S(c)\mid x\in\mathcal W_x^\vee(c)\right\}.$$ Here, $\mathcal W_x$ denotes the $\mathcal S$-system on $X$ generated by $x$, i.e. the smallest system containing $x$. So $x$ belongs to $\mathcal P(c)$ if it is in some sense \emph{positive on itself}. To make this more precise, write  $x=\sum_{i=1}^d x_i\otimes v_i\in X\otimes\mathcal S(c),$ and recall that by \cite[Theorem 3.17]{vn} the dual system $\mathcal W_x^\vee$ is concretely realized  on $\mathcal A_c$ in terms of the linear map \begin{equation}\label{eq:map}x\colon X'=X\to \mathcal S(c);\ x_i\mapsto v_i.\end{equation} So $x\in\mathcal P(c)$ means $$(x\otimes{\rm id}_{\mathcal S(c)})(x)=\sum_{i=1}^dv_i\otimes v_i\in \mathcal A_c(c)= A(c\otimes c),$$ which shows the connection to the above mentioned positivity problem. It also shows that $\mathcal P(c)$ is closed.  The arising collection  $\mathcal P=\left( \mathcal P(c)\right)_{c\in\tt C}$ is closed under linear scaling and applying morphisms from $\tt C$ to the second factor, but not convex and thus  not an $\mathcal S$-system in general.
The following proposition contains the most important facts about $\mathcal P$ needed in the proof of our  main theorem below.

\begin{proposition}\label{prop:p}
(i) If $\mathcal G$ is an $\mathcal S$-system on $X$ with $\mathcal G\subseteq \mathcal G^\vee,$ then $\mathcal G\subseteq\mathcal P$.

\noindent
(ii) For $x\in\mathcal P(c)$ we have $\mathcal W_x\subseteq \mathcal P$.

\noindent
(iii) If $0\neq x\in\partial \mathcal P(c)$, then $x\in\partial \mathcal W_x^\vee(c).$
\end{proposition}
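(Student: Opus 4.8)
The plan is to dispatch (i) and (ii) by formal nonsense and reserve the real work for (iii). For (i), take $x\in\mathcal G(c)$. Since $\mathcal W_x$ is by definition the smallest $\mathcal S$-system on $X$ containing $x$ at level $c$, and $\mathcal G$ is such a system, we have $\mathcal W_x\subseteq\mathcal G$. Duality is order-reversing level-wise: from $\mathcal G(d^*)\supseteq\mathcal W_x(d^*)$ and the order-reversal of cone duality we get $\mathcal G^\vee(d)=\mathcal G(d^*)^\vee\subseteq\mathcal W_x(d^*)^\vee=\mathcal W_x^\vee(d)$, i.e. $\mathcal G^\vee\subseteq\mathcal W_x^\vee$. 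Feeding in the hypothesis $\mathcal G\subseteq\mathcal G^\vee$ gives $x\in\mathcal G(c)\subseteq\mathcal G^\vee(c)\subseteq\mathcal W_x^\vee(c)$, that is $x\in\mathcal P(c)$; hence $\mathcal G\subseteq\mathcal P$. For (ii), fix $x\in\mathcal P(c)$, so $x\in\mathcal W_x^\vee(c)$. Then $\mathcal W_x^\vee$ is itself a system containing $x$ at level $c$, and minimality of $\mathcal W_x$ gives $\mathcal W_x\subseteq\mathcal W_x^\vee=(\mathcal W_x)^\vee$. Applying (i) with $\mathcal G=\mathcal W_x$ immediately yields $\mathcal W_x\subseteq\mathcal P$.

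For (iii) I would first make $\mathcal W_x^\vee(c)$ explicit. By \cite[Theorem 3.17]{vn} the dual $\mathcal W_x^\vee$ is concretely realized on $\mathcal A_c$ by the map $x$ of \eqref{eq:map}, and since the largest system making a linear map completely positive has level-$c$ cone equal to the corresponding preimage, $\mathcal W_x^\vee(c)=(x\otimes\mathrm{id}_{\mathcal S(c)})^{-1}(A(c\otimes c))$, while $(x\otimes\mathrm{id}_{\mathcal S(c)})(x)=\sum_i v_i\otimes v_i$. As $x\in\mathcal P(c)$ already gives $x\in\mathcal W_x^\vee(c)$, the task reduces to showing $x\notin\mathrm{int}\,\mathcal W_x^\vee(c)$. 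The first step is a continuity observation: the quadratic map $y\mapsto(y\otimes\mathrm{id}_{\mathcal S(c)})(y)$ is continuous, so if its value $\sum_i v_i\otimes v_i$ at $x$ lay in $\mathrm{int}\,A(c\otimes c)$, an entire neighbourhood of $x$ would be mapped into $A(c\otimes c)$ and hence lie in $\mathcal P(c)$, contradicting $x\in\partial\mathcal P(c)$. Therefore $\sum_i v_i\otimes v_i\in\partial A(c\otimes c)$.

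Next I would pass this boundary information through the preimage by a supporting-functional argument. Since $A(c\otimes c)$ is proper, choose a nonzero functional $\phi$ supporting it at $\sum_i v_i\otimes v_i$, i.e. $\phi\geqslant 0$ on $A(c\otimes c)$ and $\phi(\sum_i v_i\otimes v_i)=0$; since the swap $\tau$ on $\mathcal S(c)\otimes\mathcal S(c)=\mathcal S(c\otimes c)$ is induced by the braiding of $c\otimes c$ and so preserves $A(c\otimes c)$, I may moreover take $\phi$ symmetric. The pullback $\psi:=\phi\circ(x\otimes\mathrm{id}_{\mathcal S(c)})$ is then nonnegative on $\mathcal W_x^\vee(c)=(x\otimes\mathrm{id}_{\mathcal S(c)})^{-1}(A(c\otimes c))$ and satisfies $\psi(x)=\phi(\sum_i v_i\otimes v_i)=0$. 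A nonzero functional that is nonnegative on a closed convex cone and vanishes at one of its points forces that point onto the boundary, so provided $\psi\neq 0$ we conclude $x\in\partial\mathcal W_x^\vee(c)$, as required.

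The hard part is exactly the nonvanishing $\psi\neq 0$, equivalently that $\phi$ does not annihilate the image $V\otimes\mathcal S(c)$ of $x\otimes\mathrm{id}_{\mathcal S(c)}$, where $V:=\mathrm{span}\{v_1,\dots,v_d\}$. A priori every supporting functional at $\sum_i v_i\otimes v_i$ could kill $V\otimes\mathcal S(c)$; this happens precisely when $V\otimes\mathcal S(c)\subseteq\mathrm{span}\,F$ for the minimal face $F$ of $A(c\otimes c)$ through $\sum_i v_i\otimes v_i$, and in that degenerate situation $x$ would genuinely be interior to $\mathcal W_x^\vee(c)$, so this is what must be excluded for $x\neq 0$. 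I expect it to be ruled out by the tensor structure of the intrinsic cone: the support of the ``square'' $\sum_i v_i\otimes v_i$ is confined to $V\otimes V$, whereas $\mathrm{span}\,F$ would have to absorb the whole slab $V\otimes\mathcal S(c)$; combining this with the swap-symmetry $\tau(A(c\otimes c))=A(c\otimes c)$ forces $\mathrm{span}\,F$ to be the entire space, i.e. $F=A(c\otimes c)$, contradicting $\sum_i v_i\otimes v_i\in\partial A(c\otimes c)$. For the operator stem this is a direct image/rank computation on $\mathrm{Psd}(H\otimes H)$; for a general stem I would extract the analogous statement about proper faces of intrinsic cones from \cite{vn}. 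Once this incompatibility is established, $\psi\neq 0$ and (iii) closes.
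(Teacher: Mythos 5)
Parts (i) and (ii) of your proposal are correct and essentially identical to the paper's argument, and the first two steps of your (iii) are also sound: the continuity of $y\mapsto (y\otimes\mathrm{id}_{\mathcal S(c)})(y)$ does give $\sum_i v_i\otimes v_i\in\partial A(c\otimes c)$, and a nonzero supporting functional pulled back through $x\otimes\mathrm{id}_{\mathcal S(c)}$ would indeed finish the proof. The genuine gap is exactly the step you yourself flag as the hard part: you never prove $\psi\neq 0$, i.e.\ the existence of a supporting functional at $\sum_i v_i\otimes v_i$ that does not annihilate the image $W=V\otimes\mathcal S(c)$, and the mechanism you sketch for it fails. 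Swap-symmetry of the minimal face $F$ only yields $\mathrm{span}\,F\supseteq V\otimes\mathcal S(c)+\mathcal S(c)\otimes V$, which is a \emph{proper} subspace whenever $V\neq\mathcal S(c)$, so no contradiction with $F$ being a proper face arises. Indeed the configuration you hoped to exclude actually occurs: for the operator stem take $d=1$ and $v_1=p$ a rank-one projection; then every element of $V\otimes\mathcal S(c)$ has range inside $(\mathrm{im}\,p)\otimes H$, so $V\otimes\mathcal S(c)$ lies in the span of the proper face of $\mathrm{Psd}(H\otimes H)$ consisting of matrices with range in $(\mathrm{im}\,p)\otimes H$. (A secondary error: in the ``degenerate situation'' $x$ need \emph{not} be interior to $\mathcal W_x^\vee(c)$; interiority is governed by the relative interior of $A(c\otimes c)\cap W$ inside $W$, and a point can lie on that relative boundary even when every supporting functional of the big cone kills $W$ --- think of a plane tangent to the circular cone in $\mathbb R^3$ along a ray: the unique supporting functional kills the plane, yet the intersection is just the ray.)

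The paper's proof supplies precisely the missing idea, via a case distinction on whether $V=\mathrm{span}\{v_1,\dots,v_d\}$ meets $\mathrm{int}\,A(c)$. If it does, then by \cite[Lemma 3.8]{vn} the image $W$ meets $\mathrm{int}\,A(c\otimes c)$, and then no nonzero supporting functional can annihilate $W$ (it is strictly positive on interior points of the full-dimensional cone), so your Step~2 closes immediately; equivalently, the preimage of the boundary is the boundary of the preimage. If instead $V$ meets $A(c)$ only in its boundary --- exactly your problematic case --- the paper constructs the required functional explicitly as a \emph{product}: choose $\varphi\in{\tt C}(c,\mathbf 1^*)$ with $\mathcal S(\varphi)$ vanishing on $V$ but positive on the relative interior of $A(c)$, and $\psi\in{\tt C}(c,\mathbf 1^*)$ with $\mathcal S(\psi)(v_1)\neq 0$; then $\mathcal S(\psi\otimes\varphi)$ is nonnegative on $A(c\otimes c)$, vanishes at $\sum_i v_i\otimes v_i$ (since $\mathcal S(\varphi)(v_i)=0$ for all $i$), but is nonzero at $v_1\otimes w$ for suitable $w$, hence does not kill $W$. (The paper phrases this as exhibiting perturbations $x_\varepsilon\to x$ with $x_\varepsilon\notin\mathcal W_x^\vee(c)$, but it is the same functional doing the work.) Without this case split and the product-functional construction, your proof of (iii) is incomplete, and as sketched it cannot be repaired along the face-theoretic lines you propose.
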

\begin{proof}
($i$) For $x\in\mathcal G(c)$ we have $\mathcal W_x\subseteq \mathcal G$, so $$x\in \mathcal W_x(c)\subseteq\mathcal G(c) \subseteq \mathcal G^\vee(c)\subseteq \mathcal W_x^\vee(c)$$ and thus $x\in\mathcal P(c)$. For ($ii$) let $x\in\mathcal P(c)$. From $x\in\mathcal W_x^\vee(c)$ we obtain $\mathcal W_x\subseteq \mathcal W_x^\vee,$ and ($i$) thus implies $\mathcal W_x\subseteq \mathcal P$. 

For ($iii$) write $x=\sum_{i=1}^d x_i\otimes v_i\in X\otimes\mathcal S(c)$. In the following we will freely switch between the use of $x$ as an element of $X\otimes \mathcal S(c)$ and as a linear map $X=X'\to \mathcal S(c)$ as in (\ref{eq:map}).

First assume that the span of the $v_i$ (i.e. the image of the map $x$) intersects the interior of the intrinsic cone $A(c)$. Then all maps $x\otimes{\rm id}_{\mathcal S(d)}$ hit the interior of $\mathcal A_c(d)=A(c\otimes d)$, since the cointrinsic cones $B(d)$ all have nonempty interior and ${\rm int}(A(c))\barbelow\otimes{\rm int}(B(d))\subseteq {\rm int} (A(c\otimes d))$ holds,  see \cite[Lemma 3.8]{vn}. Since $\mathcal W_x^\vee$ is concretely realized by the map $x$ \cite[Theorem 3.17]{vn}, the boundary of $\mathcal W_x^\vee(c)$ is the inverse image of the boundary of $\mathcal A_c(c)=A(c\otimes c)$. From $x\in\partial \mathcal P(c)$ we clearly obtain $\sum_i v_i\otimes v_i\in \partial A(c\otimes c)$, which thus implies  $x\in\partial \mathcal W_x^\vee(c)$.

Second assume that the space spanned by the $v_i$ intersects $A(c)$ only at the  boundary (including the case where it only intersects at the origin). By definition of $A(c)$ as the dual of all $\mathcal S(\varphi)$ for $\varphi\in\tt C(c,\bf 1^*),$  we find some $\varphi\in \tt  C(c,\mathbf 1^*)$ such that  $\mathcal S(\varphi)$  vanishes at all $v_i,$ but is positive on the relative interior of $A(c)$ (strictly speaking one might need to use a positive combination of such functionals, but that doesn't change the argument). Choose some $w\in\mathcal S(c)$ with $\mathcal S(\varphi)(w)\neq 0$. Further, since $x\neq 0$, at least one $v_i\neq 0$, so assume w.l.o.g.\ that $v_1\neq 0$. Now choose  some $\psi\in{\tt C}(c,\bf 1^*)$ with $\mathcal S(\psi)(v_1)\neq 0$ (if there was no such $\psi$, $v_1$ would belong to $A(c)\cap -A(c)=\{0\}$). Then $$\left(\underbrace{\mathcal S(\psi)\otimes \mathcal S(\varphi)}_{=\mathcal S(\psi\otimes\varphi)}\right)\left(\sum_i v_i\otimes (v_i\pm \varepsilon \delta_{i1}w)\right)=\pm\varepsilon \underbrace{\mathcal S(\psi)(v_1)}_{\neq 0}\underbrace{\mathcal S(\varphi)(w)}_{\neq 0}.$$ For one of the choices $\pm$ this is negative, which shows that one of $\sum_i v_i\otimes (v_i\pm\varepsilon\delta_{i1} w)\notin A(c\otimes c),$ and thus $x_\varepsilon\coloneqq\sum_i x_i\otimes (v_i\pm\varepsilon\delta_{i1} w)\notin \mathcal W_x^\vee(c).$ Since $x_\varepsilon$ converges to $x$ for $\varepsilon\to 0$, this proves  $x\in\partial W_x^\vee(c).$
\end{proof}

We now state and prove our first main result.  As mentioned before, \Cref{thm:dual} for convex cones alone is much older \cite[Theorem 5]{ba}. Our proof starts similarly, but we will encounter  some additional difficulties for which we need \cref{prop:p}.
\begin{theorem} \label{thm:dual}
Let  $\mathcal G$ be an $\mathcal S$-system on $X$  with  $\mathcal G\subseteq \mathcal G^{\vee}$. Then there exists an $\mathcal S$-system $\mathcal E$ on $X$  with $$\mathcal G\subseteq \mathcal E=\mathcal E^\vee\subseteq \mathcal G^\vee.$$
\end{theorem}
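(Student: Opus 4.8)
The plan is to mimic the Zorn's lemma argument of \cite{ba}, with \Cref{prop:p} supplying the extra input that is invisible in the purely conical case (where the ``positive on itself'' condition is automatic). First I would consider the partially ordered set $\mathfrak M$ of all $\mathcal S$-systems $\mathcal D$ on $X$ with $\mathcal G\subseteq\mathcal D\subseteq\mathcal D^\vee$, ordered by level-wise inclusion; note that every such $\mathcal D$ automatically satisfies $\mathcal D\subseteq\mathcal D^\vee\subseteq\mathcal G^\vee$. A chain $(\mathcal D_i)_i$ in $\mathfrak M$ has an upper bound given by the level-wise closure $\mathcal D=\overline{\bigcup_i\mathcal D_i}$: this is again a system, and since any two members of a chain are comparable one checks $\mathcal D_j\subseteq\mathcal D_i^\vee$ for all $i,j$, whence $\mathcal D\subseteq\bigcap_i\mathcal D_i^\vee=\mathcal D^\vee$. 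Zorn's lemma then yields a maximal element $\mathcal E\in\mathfrak M$, and the whole problem reduces to proving that this $\mathcal E$ is self-dual.

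The engine of the reduction is an \emph{addability criterion}: for $x\in X\otimes\mathcal S(c)$, the system $\langle\mathcal E,x\rangle$ generated by $\mathcal E$ together with $x$ lies in $\mathfrak M$ if and only if $x\in\mathcal E^\vee(c)$ and $x\in\mathcal P(c)$. Indeed, realizing $\langle\mathcal E,x\rangle$ as the join of $\mathcal E$ and $\mathcal W_x$ and dualizing the join into an intersection, the requirement $\langle\mathcal E,x\rangle\subseteq\langle\mathcal E,x\rangle^\vee=\mathcal E^\vee\cap\mathcal W_x^\vee$ unravels, using $\mathcal E\subseteq\mathcal E^\vee$ and biduality, into $\mathcal W_x\subseteq\mathcal E^\vee$ (equivalently $x\in\mathcal E^\vee(c)$) together with $\mathcal W_x\subseteq\mathcal W_x^\vee$ (which is exactly $x\in\mathcal P(c)$). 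Consequently, maximality of $\mathcal E$ forces $\mathcal E^\vee(c)\cap\mathcal P(c)=\mathcal E(c)$ for every $c\in{\tt C}$: the inclusion $\supseteq$ is clear since $\mathcal E\subseteq\mathcal E^\vee$ and, by \Cref{prop:p}(i), $\mathcal E\subseteq\mathcal P$, while any $x$ in the intersection but outside $\mathcal E(c)$ would enlarge $\mathcal E$, contradicting maximality. It therefore suffices to prove the reverse containment $\mathcal E^\vee(c)\subseteq\mathcal P(c)$, for then $\mathcal E^\vee(c)=\mathcal E^\vee(c)\cap\mathcal P(c)=\mathcal E(c)$ and we are done.

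The main obstacle is exactly this containment $\mathcal E^\vee(c)\subseteq\mathcal P(c)$, where the non-convexity of $\mathcal P$ must be tamed and where I expect to spend the real effort. Suppose it fails, so some $v\in\mathcal E^\vee(c)$ has $v\notin\mathcal P(c)$. Fixing a nonzero relative interior point $x_0$ of $\mathcal E(c)$ and running along the segment $[x_0,v]\subseteq\mathcal E^\vee(c)$, the point $x$ at which the segment exits the closed set $\mathcal P(c)$ is a nonzero boundary point of $\mathcal P(c)$; by the identity $\mathcal E^\vee(c)\cap\mathcal P(c)=\mathcal E(c)$ it also lies in $\mathcal E(c)$, so $\mathcal W_x\subseteq\mathcal E$ and hence $\mathcal E^\vee(c)\subseteq\mathcal W_x^\vee(c)$. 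Now \Cref{prop:p}(iii) gives $x\in\partial\mathcal W_x^\vee(c)$, and since $\mathcal W_x^\vee$ is concretely realized by the map $x$ \cite[Theorem 3.17]{vn}, a supporting functional of the cone $\mathcal W_x^\vee(c)$ at $x$ can be produced, namely $\ell=\Lambda\circ(x\otimes{\rm id})$ with $\Lambda$ supporting $A(c\otimes c)$ at $\sum_i v_i\otimes v_i$. Then $\ell\geqslant 0$ on $\mathcal W_x^\vee(c)\supseteq\mathcal E^\vee(c)$ and $\ell(x)=0$, so writing $x=(1-t)x_0+tv$ with $t\in(0,1)$ the relation $0=\ell(x)=(1-t)\ell(x_0)+t\ell(v)$ with $\ell(x_0),\ell(v)\geqslant 0$ forces $\ell(x_0)=\ell(v)=0$. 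The crux, and the part I expect to be delicate, is to rule out this degeneracy: I would choose $x_0$ interior and perturb $v$ within the relatively open set $\mathcal E^\vee(c)\setminus\mathcal P(c)$, exploiting the swap-invariance of $A(c\otimes c)$ (and of the symmetric point $\sum_i v_i\otimes v_i$) to symmetrize $\Lambda$, so that for a generic choice the supporting functional does not vanish simultaneously at $x_0$ and $v$. This contradiction establishes $\mathcal E^\vee(c)\subseteq\mathcal P(c)$, and with the reduction above we conclude $\mathcal G\subseteq\mathcal E=\mathcal E^\vee\subseteq\mathcal G^\vee$.
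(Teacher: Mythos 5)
Your Zorn's lemma construction and your derivation of the identity $\mathcal E=\mathcal E^\vee\cap\mathcal P$ from maximality (via the system generated by $\mathcal E$ and $x$, biduality, and \Cref{prop:p}) are exactly the paper's proof, and they are correct. The genuine gap is in the last step, the one you yourself call the crux. After producing the nonzero exit point $x\in\mathcal E(c)\cap\partial\mathcal P(c)$ on the segment $[x_0,v)$ and invoking \Cref{prop:p}(iii) to get $x\in\partial\mathcal W_x^\vee(c)$, you linearize the situation through a supporting functional $\ell=\Lambda\circ(x\otimes{\rm id})$ and end at $\ell(x_0)=\ell(v)=0$, which is not a contradiction; everything is then deferred to the unproved claim that a generic perturbation of $v$ together with a swap-symmetrization of $\Lambda$ rules this degeneracy out. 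That claim is not a proof, and it is doubtful as stated: (a) nothing guarantees $\ell\neq 0$, since $\Lambda$ is only nonzero as a functional on $\mathcal S(c)\otimes\mathcal S(c)$ and may vanish identically on ${\rm im}(x)\otimes\mathcal S(c)$; (b) with $x_0$ chosen merely in the relative interior of $\mathcal E(c)$, the relation $\ell(x_0)=0$ only tells you that $\ell$ vanishes on ${\rm span}\,\mathcal E(c)$, and this together with $\ell(v)=0$ is perfectly consistent with $\ell$ being a supporting functional of $\mathcal W_x^\vee(c)$ at $x$, for instance when $\mathcal E(c)$ is lower-dimensional; and (c) perturbing $v$ moves the exit point $x$, hence the realizing map $x$ and with it the family of admissible $\Lambda$, so genericity in $v$ gives you no control over $\ell$ at all.

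The paper's own finish shows that the functional is a detour which manufactures precisely this difficulty. From $\mathcal W_x\subseteq\mathcal E$ one gets $\mathcal E^\vee\subseteq\mathcal W_x^\vee$ (you derived this too), and then $x\in\partial\mathcal W_x^\vee(c)$ immediately gives $x\in\partial\mathcal E^\vee(c)$: a point of the smaller closed cone which is not interior to the larger cone cannot be interior to the smaller one. The contradiction is then pure convex geometry, with no $\ell$ anywhere: anchoring your segment at a point $x_0\in\mathcal E(c)$ lying in the interior of $\mathcal E^\vee(c)$, instead of at a relative interior point of $\mathcal E(c)$, every point of $[x_0,v)$, in particular $x$, is interior to $\mathcal E^\vee(c)$, contradicting $x\in\partial\mathcal E^\vee(c)$. (The existence of such an anchor, and the switch to relative interiors when $\mathcal E^\vee(c)$ is not full-dimensional, is the one remaining point of care, and it is left implicit in the paper as well; but it is a much smaller issue than the degeneracy your version cannot exclude.) Note that the same choice of anchor would also salvage your functional variant whenever $\ell\neq 0$, since a nonzero functional that is nonnegative on $\mathcal E^\vee(c)$ cannot vanish at an interior point of it; the missing idea is thus the choice of $x_0$ and the direct use of the inclusion $\mathcal E^\vee\subseteq\mathcal W_x^\vee$, not genericity of $v$ or symmetry of $\Lambda$.
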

\begin{proof}
We consider the nonempty set $$\mathcal M\coloneqq\left\{(\mathcal H,X)\in{\tt Sys}(\mathcal S)\mid \mathcal G\subseteq\mathcal H\subseteq\mathcal H^\vee \right\},$$ partially ordered by inclusion. Let us check that the assumption of Zorn's Lemma is fulfilled. So let $\left(\mathcal H_i\right)_{i\in I}$ be a chain in $\mathcal M$. For any $i,j\in I$ we either have $\mathcal H_i\subseteq \mathcal H_j\subseteq \mathcal H_j^\vee,$  or $\mathcal H_j\subseteq \mathcal H_i,$ in which case $\mathcal H_i\subseteq \mathcal H_i^\vee\subseteq \mathcal H_j^\vee$ as well. So $$\sum_i \mathcal H_i \subseteq \bigcap_i \mathcal H_i^\vee=\left(\sum_i \mathcal H_i\right)^\vee,$$ which shows that $\sum_i\mathcal H_i$ is an upper bound for the chain in $\mathcal M$. So Zorn's Lemma guarantees the existence of a maximal element $\mathcal E\in\mathcal M.$

We first show that $\mathcal E=\mathcal E^\vee\cap \mathcal P$, where the inclusion from left to right is clear from \Cref{prop:p}($i$). So take $x\in\mathcal E^\vee(c)\cap\mathcal P(c)$. We consider the $\mathcal S$-system $$\widetilde{\mathcal E}\coloneqq \mathcal E+ \mathcal W_x$$ generated by $\mathcal E$ and $x,$ and claim that $\widetilde{\mathcal E}\subseteq \widetilde{\mathcal E}^\vee$ holds. By maximality of $\mathcal E$ this will imply $\mathcal E=\widetilde{\mathcal E}$ and thus $x\in\mathcal E(c)$. 

From $x\in \mathcal E^\vee(c)$ we obtain $\mathcal W_x\subseteq\mathcal E^\vee$, and biduality \cite[Theorem 3.17]{vn} implies $\mathcal E\subseteq \mathcal W_x^\vee.$
We thus get $${\widetilde{\mathcal E}}^\vee=\mathcal E^\vee\cap \mathcal W_x^\vee\supseteq \mathcal E\cap \mathcal W_x^\vee=\mathcal E.$$  But since $x\in\mathcal P(c)$ we also have $x\in\mathcal W_x^\vee(c),$  so $x\in\widetilde{\mathcal E}^\vee(c)$ and thus $\widetilde{\mathcal E}\subseteq \widetilde{\mathcal E}^\vee.$ 

So we have indeed shown $\mathcal E=\mathcal E^\vee\cap \mathcal P.$
We now argue that this already implies $\mathcal E=\mathcal E^\vee$. Take $0\neq x\in\mathcal E(c)\cap \partial \mathcal P(c).$ Then $\mathcal W_x\subseteq\mathcal E$ and thus $\mathcal E^\vee\subseteq \mathcal W_x^\vee$. By \Cref{prop:p}($iii$)  we also  have $x\in\partial \mathcal W_x^\vee(c)$, which then clearly also implies $x\in\partial \mathcal E^\vee(c)$. This finally shows $\mathcal E=\mathcal E^\vee$.
\end{proof}

When applied to the simplex stem, \Cref{thm:dual} recovers the result from \cite{ba} that each cone contained in its dual can be extended to a self-dual cone.
When applied to the operator stem, \Cref{thm:dual} reads as follows: 
\begin{corollary}
Let $\mathcal G$ be an abstract operator system on the finite-dimensional space $X$. If $\mathcal G\subseteq \mathcal G^\vee,$ then $\mathcal G$ can be enlarged to a self-dual operator system on $X$.
\end{corollary}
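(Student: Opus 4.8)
The plan is to recognize this corollary as a direct specialization of \Cref{thm:dual} to the operator stem, so that essentially no new work is required beyond checking that the operator stem fits the standing hypotheses. First I would recall from the preliminaries that abstract operator systems on finite-dimensional spaces are precisely the $\mathcal O$-systems, where $\mathcal O\colon {\tt FHilb}^{\rm op}\to{\tt FVec}$ is the operator stem sending $H\mapsto\mathbb B(H)_{\rm her}$. Under this identification an abstract operator system $\mathcal G$ on $X$ is an $\mathcal O$-system on $X$, the duality of operator systems agrees with the level-wise system duality $\mathcal G^\vee$, and the hypothesis $\mathcal G\subseteq\mathcal G^\vee$ is exactly the inclusion assumed in the theorem.

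Next I would verify the standing assumption of \Cref{sec:sd}, namely that all intrinsic cones $A(c)$ are proper. For the operator stem this is immediate: as noted in the preliminaries, $A(H)=B(H)={\rm Psd}(H)$ for every $H\in{\tt FHilb}^{\rm op}$, and the cone of positive semidefinite operators is closed, sharp, and has nonempty interior. Hence the operator stem satisfies every hypothesis under which \Cref{thm:dual} was established, and the theorem applies without modification.

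With these identifications in place I would simply invoke \Cref{thm:dual} for the chosen stem $\mathcal O$ and the system $\mathcal G$: it yields an $\mathcal O$-system $\mathcal E$ on $X$ with $\mathcal G\subseteq\mathcal E=\mathcal E^\vee\subseteq\mathcal G^\vee$. Translating back through the correspondence, $\mathcal E$ is a self-dual abstract operator system on $X$ enlarging $\mathcal G$, which is exactly the assertion.

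I do not expect a genuine obstacle at the level of the corollary itself, since all the difficulty has already been absorbed into \Cref{prop:p} and \Cref{thm:dual}. The only point deserving care is the bookkeeping of the dictionary between abstract operator systems and $\mathcal O$-systems, in particular confirming that self-duality $\mathcal E=\mathcal E^\vee$ in the sense of $\mathcal O$-systems coincides with self-duality in the customary operator-system sense under the fixed identification $X=X'$. Once that translation is checked, and once one notes that the enlarged system $\mathcal E$ is again of operator-system type because it is produced inside ${\tt Sys}(\mathcal O)$, the result follows verbatim from the theorem.
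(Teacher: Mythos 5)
Your proposal is correct and matches the paper exactly: the paper offers no separate argument for this corollary, treating it as the immediate specialization of \Cref{thm:dual} to the operator stem $\mathcal O$, under the identification of abstract operator systems with $\mathcal O$-systems and the observation that $A(H)={\rm Psd}(H)$ is proper. Your extra care about the dictionary between the two notions of duality is sound but, as you anticipated, requires no new work.
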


\section{Self-Dual Tensor Products of Cone Systems}

In this section we apply \Cref{thm:dual} to prove the existence of several self-dual functorial tensor products. Again let $\mathcal S\colon\tt C\to\tt{FVec}$ be a stem for which all intrinsic cones are proper. We further assume that$$A(c)\barbelow\otimes  A(d)\subseteq A(c\otimes d)$$ holds for all $c,d\in{\tt C}$, which is needed for tensor products of $\mathcal S$-systems to exist \cite[Section 3.5]{vn}. Recall that  $\left({\tt Sys}(\mathcal S),\barbelow\otimes,\mathcal B_{\bf 1^*},\vee\right)$ is a $*$-autonomous category
and $$\mathcal S'\colon{\tt Sys}(\mathcal S)\to {\tt FVec};\ (\mathcal G,X)\mapsto X$$ is again a stem. 
In this section we will  restrict ourselves to ${\tt Sys}_{\rm pr}(\mathcal S),$ the full subcategory of $\mathcal S$-systems with full-dimensional cones at level $\bf 1$ and sharp cones at level  $\bf 1^*$ (this implies that all cones at all levels are proper \cite[Lemma 3.8]{vn}). The restriction of  $\mathcal S'$ to ${\tt Sys}_{\rm pr}(\mathcal S)$ is clearly also a stem. 

For the operator stem $\mathcal O$, the category ${\tt Sys}_{\rm pr}(\mathcal O)$ is the familiar category of {abstract operator systems  on finite-dimensional spaces (the properness of cones is part of the assumption for abstract operator systems), that we also denote by  {\tt FAOS}. For the simplex stem $\mathcal S$, ${\tt Sys}_{\rm pr}(\mathcal S)$ equals the category ${\tt PCone}$ of proper finite-dimensional cones.

\begin{definition}\label{def:tp}
A \emph{tensor product of proper $\mathcal S$-systems}  is a functor $$\otimes\colon {\tt Sys}_{\rm pr}(\mathcal S)\times{\tt Sys}_{\rm pr}(\mathcal S) \to {\tt Sys}_{\rm pr}(\mathcal S)$$ for which the following diagram commutes
$$\xymatrix{{\tt Sys}_{\rm pr}(\mathcal S)\times{\tt Sys}_{\rm pr}(\mathcal S)\ar[rr]^{\qquad\otimes}\ar[dr]_{\mathcal S'\otimes\mathcal S'}&&{\tt Sys}_{\rm pr}(\mathcal S) \ar[dl]^{\mathcal S'}\\ &{\tt FVec} &}$$ 
In words, to any two $\mathcal S$-systems $(\mathcal G,X),(\mathcal H,Y)\in{\tt Sys}_{\rm pr}(\mathcal S)$ a tensor product assigns a proper $\mathcal S$-system on the space $X\otimes Y$, in a way that is functorial with respect to completely positive maps. 
\end{definition}

Note that this notion slightly differs from the one given in \cite{vn, kavruk}, but the smallest and largest tensor products $\barbelow\otimes, \barabove\otimes$ from \cite{vn, kavruk}  are tensor products in our sense (see \Cref{sec:tens} for a more thorough discussion of the differences).
For each tensor product $\otimes$ on ${\tt Sys}_{\rm pr}(\mathcal S)$ there exists the \emph{dual tensor product} $\otimes^\vee,$ defined as $$\mathcal G\otimes^\vee \mathcal H\coloneqq \left(\mathcal G^\vee\otimes\mathcal H^\vee\right)^\vee.$$ The dual of the smallest is the largest tensor product, $\barbelow{\otimes}^\vee=\barabove\otimes,$ and vice versa. A tensor product is \emph{self-dual} if $\otimes=\otimes^\vee.$ This means that $$\left(\mathcal G\otimes\mathcal H\right)^\vee=\mathcal G^\vee\otimes\mathcal H^\vee$$ holds for all $\mathcal G,\mathcal H$, i.e. $\otimes$ is compatible with duality.
We now prove that in between $\barbelow\otimes$ and $\barabove\otimes$ there  exists a self-dual tensor product on ${\tt Sys}_{\rm pr}(\mathcal S)$. 

\begin{theorem}\label{thm:tens} On ${\tt Sys}_{\rm pr}(\mathcal S)$ there exists a self-dual tensor product in between $\barbelow\otimes$ and $\barabove\otimes.$
\end{theorem}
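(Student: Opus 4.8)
The plan is to reduce the existence of a self-dual tensor product to an application of \Cref{thm:dual}, using the meta-observation already emphasized in the introduction: a tensor product of $\mathcal S$-systems is itself an $\mathcal S'$-system (equivalently, an $\mathcal S\otimes\mathcal S$-system) on the ``doubled'' stem. Concretely, recall that $\left({\tt Sys}(\mathcal S),\barbelow\otimes,\mathcal B_{\mathbf 1^*},\vee\right)$ is $*$-autonomous and $\mathcal S'$ is again a stem. The minimal and maximal tensor products $\barbelow\otimes$ and $\barabove\otimes$ can each be encoded as a single cone system over an appropriate product category, where the ``levels'' of the new system are pairs $(c,d)\in{\tt C}\times{\tt C}$ and the cone assigned at level $(c,d)$ lives in $(X\otimes Y)\otimes\bigl(\mathcal S(c)\otimes\mathcal S(d)\bigr)$. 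The key structural input I would establish first is that, under this encoding, $\barbelow\otimes$ and $\barabove\otimes$ correspond to cone systems that are \emph{dual to each other}, i.e.\ $(\barbelow\otimes)^\vee=\barabove\otimes$, which is exactly the duality relation $\barbelow\otimes^\vee=\barabove\otimes$ recorded just above the theorem.

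With that encoding in place, the second step is to verify the hypothesis of \Cref{thm:dual}, namely $\barbelow\otimes\subseteq(\barbelow\otimes)^\vee=\barabove\otimes$. This is the standard and well-known inclusion of the minimal into the maximal tensor product, which holds level-wise for every pair of systems; in the cone-system language it says precisely that the encoded system $\mathcal G_{\barbelow\otimes}$ satisfies $\mathcal G_{\barbelow\otimes}\subseteq\mathcal G_{\barbelow\otimes}^\vee$. \Cref{thm:dual} then produces a self-dual system $\mathcal E$ with $\barbelow\otimes\;\subseteq\;\mathcal E=\mathcal E^\vee\;\subseteq\;\barabove\otimes$, sitting strictly between the minimal and maximal tensor products and compatible with duality in the sense $(\mathcal G\otimes\mathcal H)^\vee=\mathcal G^\vee\otimes\mathcal H^\vee$.

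The third and most delicate step is \textbf{functoriality}: I must check that the self-dual $\mathcal E$ furnished abstractly by Zorn's Lemma actually defines a bifunctor $\otimes\colon{\tt Sys}_{\rm pr}(\mathcal S)\times{\tt Sys}_{\rm pr}(\mathcal S)\to{\tt Sys}_{\rm pr}(\mathcal S)$ making the triangle in \Cref{def:tp} commute, and that it lands in the proper subcategory. Functoriality means that for completely positive maps $\psi\colon X_1\to X_2$ and $\chi\colon Y_1\to Y_2$, the map $\psi\otimes\chi$ is completely positive from $\mathcal G_1\otimes\mathcal H_1$ to $\mathcal G_2\otimes\mathcal H_2$. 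The subtlety is that \Cref{thm:dual} is applied pointwise, for each fixed pair $(\mathcal G,\mathcal H)$, and a priori the maximal self-dual systems chosen at different pairs need not be coherent under morphisms. I expect this compatibility to follow from the fact that the construction in \Cref{thm:dual} is itself intrinsic---$\mathcal E$ is characterized as a \emph{maximal} element of the poset $\mathcal M=\{\mathcal H\mid\barbelow\otimes\subseteq\mathcal H\subseteq\mathcal H^\vee\}$, and maximality together with naturality of duality should force the chosen $\mathcal E$ to be sent into the corresponding $\mathcal E$ by any completely positive pair of maps. The cleanest route is likely to perform the entire Zorn's Lemma argument \emph{once}, at the level of the combined stem over ${\tt Sys}_{\rm pr}(\mathcal S)\times{\tt Sys}_{\rm pr}(\mathcal S)$, so that the resulting self-dual system is a single functorial object by construction rather than a family of pointwise choices; functoriality and the commuting triangle then come for free from the fact that $\mathcal S'\otimes\mathcal S'$ is a stem and $\mathcal E$ is an $\mathcal S'\otimes\mathcal S'$-system. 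The properness of the output cones at all levels follows from \cite[Lemma 3.8]{vn}, using that $\mathcal E$ is sandwiched between two proper systems.
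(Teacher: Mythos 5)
Your overall strategy is exactly the paper's: encode the entire tensor product as a \emph{single} cone system over a combined stem, observe that the minimal tensor product is contained in its dual, and apply \Cref{thm:dual} once, so that functoriality comes for free from the system axiom rather than having to be checked for a family of pointwise Zorn choices. You also correctly diagnose why the pointwise application of \Cref{thm:dual} fails, which is the real content of the reduction.

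However, the combined stem you name is not sufficient, and this is a genuine gap in the plan as written. A system over $\mathcal S'\otimes\mathcal S'$ on ${\tt Sys}_{\rm pr}(\mathcal S)\times{\tt Sys}_{\rm pr}(\mathcal S)$ assigns to each pair $\left((\mathcal G,X),(\mathcal H,Y)\right)$ just one cone in $Z\otimes X\otimes Y$ (for whatever fixed underlying space $Z$ you pick); it carries no ${\tt C}$-level structure, so it cannot produce an $\mathcal S$-system on $X\otimes Y$ as \Cref{def:tp} requires. (Your earlier sketch with levels $(c,d)$ and cones in $(X\otimes Y)\otimes\bigl(\mathcal S(c)\otimes\mathcal S(d)\bigr)$ is a pointwise encoding for a fixed pair $(\mathcal G,\mathcal H)$ --- precisely the encoding you then discard for functoriality reasons, and in any case not the level structure of \Cref{def:tp}.) The paper instead uses the \emph{threefold} stem $\widetilde{\mathcal S}=\mathcal S'\otimes\mathcal S'\otimes\mathcal S$ on ${\tt Sys}_{\rm pr}(\mathcal S)\times{\tt Sys}_{\rm pr}(\mathcal S)\times{\tt C}$, together with a second ingredient your plan leaves unspecified: the $\widetilde{\mathcal S}$-systems are taken on the fixed space $\mathbb R$, so that the cone at level $(\mathcal G,\mathcal H,c)$ lives in $\mathbb R\otimes X\otimes Y\otimes\mathcal S(c)\cong X\otimes Y\otimes\mathcal S(c)$, which is exactly the data of a tensor product, functorial by construction. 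Fixing the space $\mathbb R$ is not cosmetic: \Cref{thm:dual} is a statement about systems on one fixed space, so without it the theorem cannot be invoked at all. Finally, two hypotheses the paper verifies and you omit: the intrinsic cones of $\widetilde{\mathcal S}$ must be proper (they are minimal/maximal tensor products of the individual intrinsic cones, by \cite[Section 4.2]{vn}), which is the standing assumption needed for \Cref{thm:dual}; and the correspondence between $\widetilde{\mathcal S}$-systems on $\mathbb R$ and tensor products must be checked to be compatible with duality, so that self-duality of the abstract system really translates into the identity $(\mathcal G\otimes\mathcal H)^\vee=\mathcal G^\vee\otimes\mathcal H^\vee$.
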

\begin{proof}
Consider the threefold stem \begin{align*}\widetilde{\mathcal S}\coloneqq\mathcal S'\otimes\mathcal S'\otimes\mathcal S\colon{\tt Sys}_{\rm pr}(\mathcal S)\times{\tt Sys}_{\rm pr}(\mathcal S)\times \tt C &\to {\tt FVec} \\ \left((\mathcal G,X),(\mathcal H,Y), c\right) &\mapsto X\otimes Y\otimes\mathcal S(c).\end{align*} The arising (co)intrinsic cones are minimal/maximal tensor products of the single (co)intrinsic cones \cite[Section 4.2]{vn}.  Our properness assumptions  thus imply that all intrinsic cones of $\widetilde{\mathcal S}$ are proper, and we can apply \Cref{thm:dual}.
An $\widetilde{\mathcal S}$-system on the space $\mathbb R$  is a functor $\mathcal T$ such that the following diagram commutes: 
$$\xymatrix{{\tt FVec} \ar[r]^{{\rm id}=\mathbb R\otimes\cdot}& {\tt FVec}\\ {\tt Sys}_{\rm pr}(\mathcal S)\times{\tt Sys}_{\rm pr}(\mathcal S)\times \tt C \ar[u]_{\widetilde{\mathcal S}}\ar[r]^{\quad\qquad\mathcal T} & {\tt FCone}\ar[u]^{\mathcal F}}$$ Assuming that such  $\mathcal T$ assigns only proper cones, we define for $\mathcal G,\mathcal H\in{\tt Sys}_{\rm pr}(\mathcal S)$ $$(\mathcal G\otimes\mathcal H)(c)\coloneqq \mathcal T(\mathcal G,\mathcal H,c)$$
and see that this defines a one-to-one correspondence between $\widetilde{\mathcal S}$-systems on $\mathbb R$ (assigning only proper cones) and tensor products on ${\tt Sys}_{\rm pr}(\mathcal S).$ 
This correspondence is also  compatible with the notion of duality.

Now we consider the minimal tensor product  on ${\tt Sys}_{\rm pr}(\mathcal S)$ \cite[Section 3.5]{vn} as an $\widetilde{\mathcal S}$-system $\mathcal T$. Since the dual of the minimal tensor product is the maximal tensor product, which contains the minimal one, we have $\mathcal T\subseteq\mathcal T^\vee.$
From \Cref{thm:dual}  we obtain a self-dual $\widetilde{\mathcal S}$-system in between $\mathcal T$ and $\mathcal T^\vee$. This clearly only assigns proper cones, and thus  gives rise to a self-dual tensor product on ${\tt Sys}_{\rm pr}(\mathcal S)$, in between $\barbelow\otimes$ and $\barabove\otimes$.
\end{proof}

\begin{corollary}\label{cor:ctp} In between the minimal and maximal tensor product, there exists a self-dual functorial tensor product on the category ${\tt PCone}$ of finite-dimensional proper cones. 
\end{corollary}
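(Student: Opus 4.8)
The plan is to derive this corollary as the special case of \Cref{thm:tens} for the simplex stem, so that essentially no new work is required beyond a translation of vocabulary. The groundwork is already in place: as recorded above, for the simplex stem $\mathcal S=\mathcal F\colon{\tt SCone}\to{\tt FVec}$ one has the identification ${\tt Sys}_{\rm pr}(\mathcal S)={\tt PCone}$, and an $\mathcal S$-system is uniquely determined by its first-level cone $\mathcal G(\mathbb R_{\geqslant 0})$. Thus the only things I must supply are (a) a check that the simplex stem satisfies the standing hypotheses under which \Cref{thm:dual} and \Cref{thm:tens} were proved, and (b) a verification that the abstract $\barbelow\otimes$ and $\barabove\otimes$ of the stem restrict to the classical minimal and maximal tensor products of proper cones, so that ``in between'' carries its intended meaning.

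First I would verify properness of the intrinsic cones. For the simplex stem the morphisms in ${\tt C}(c,\mathbf 1^*)$ correspond precisely to the positive functionals on the simplex cone $c$, i.e.\ to (a generating subset of) the dual cone $c^\vee$. By the definition of $A(c)$ as the set of vectors that are nonnegative under all such functionals, one obtains $A(c)=(c^\vee)^\vee=c$, the simplex cone itself, which is proper. Hence the standing assumption of \Cref{sec:sd} that all intrinsic cones be proper is met.

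Next I would check the compatibility assumption $A(c)\barbelow\otimes A(d)\subseteq A(c\otimes d)$ required for the tensor theory. Here I would invoke the classical fact that the minimal tensor product of two simplex cones is again a simplex cone: if $c$ and $d$ are generated by bases, then the tensors of these generators form a basis of $\mathcal S(c)\otimes\mathcal S(d)$ and generate the monoidal product $c\otimes d$. Therefore $A(c)\barbelow\otimes A(d)=c\barbelow\otimes d=c\otimes d=A(c\otimes d)$, so the inclusion holds, in fact with equality (which also reflects that minimal and maximal tensor products coincide on simplices).

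With both hypotheses confirmed, \Cref{thm:tens} applies verbatim and yields a self-dual tensor product on ${\tt Sys}_{\rm pr}(\mathcal S)={\tt PCone}$ lying between the stem's $\barbelow\otimes$ and $\barabove\otimes$. The final and only delicate step is the bookkeeping in (b): matching these abstract bounds with the classical minimal and maximal tensor products of proper cones. I do not expect a genuine obstacle here, since this identification is exactly what the stem formalism of \cite{vn} provides -- for the simplex stem the induced minimal (resp.\ maximal) tensor product of $\mathcal S$-systems restricts on first levels to the classical minimal (resp.\ maximal) tensor product of cones. All the substantive work having been done in \Cref{thm:dual} and \Cref{thm:tens}, the proof of \Cref{cor:ctp} reduces to these routine specializations.
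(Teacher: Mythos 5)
Your proposal is correct and follows exactly the paper's route: the paper's entire proof of \Cref{cor:ctp} is the observation that for the simplex stem ${\tt Sys}_{\rm pr}(\mathcal S)={\tt PCone}$, so the statement is the special case of \Cref{thm:tens}. The hypothesis checks you supply (that $A(c)=c$ is proper and that $A(c)\barbelow\otimes A(d)\subseteq A(c\otimes d)$ for simplex cones) are correct and are simply left implicit in the paper, being part of the background facts about the simplex stem from the preliminaries and \cite{vn}.
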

\begin{proof} For the simplex stem $\mathcal S$ we obtain ${\tt Sys}_{\rm pr}(\mathcal S)={\tt PCone},$ so the statement follows from \Cref{thm:tens}.
\end{proof}

\begin{corollary}\label{cor:otp}
In between the minimal and maximal tensor product, there exists a self-dual functorial tensor product on the category ${\tt FAOS}$ of finite-dimensional abstract operator systems.
\end{corollary}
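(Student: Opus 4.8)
The plan is to apply \Cref{thm:tens} to the operator stem $\mathcal O\colon {\tt FHilb}^{\rm op}\to {\tt FVec}$, exactly as \Cref{cor:ctp} applied it to the simplex stem. Two things must be checked to bring $\mathcal O$ into the scope of \Cref{thm:tens}: that all intrinsic cones of $\mathcal O$ are proper, and that the compatibility $A(c)\barbelow\otimes A(d)\subseteq A(c\otimes d)$ holds for all objects $c,d$. The first is immediate from the discussion preceding \Cref{thm:dual}, where it is recorded that $A(H)=B(H)={\rm Psd}(H)$ for every $H\in{\tt FHilb}^{\rm op}$; the cone of positive semidefinite Hermitian operators on a finite-dimensional Hilbert space is closed, sharp and full-dimensional, hence proper.

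For the compatibility condition, unwinding the definitions turns $A(c)\barbelow\otimes A(d)\subseteq A(c\otimes d)$ into the inclusion ${\rm Psd}(H)\barbelow\otimes {\rm Psd}(K)\subseteq {\rm Psd}(H\otimes K)$ under the identification ${\rm Her}(H)\otimes{\rm Her}(K)\cong{\rm Her}(H\otimes K)$. The left-hand cone is by definition generated by the elementary tensors $P\otimes Q$ with $P\in{\rm Psd}(H)$ and $Q\in{\rm Psd}(K)$, and each such tensor is positive semidefinite on $H\otimes K$; since ${\rm Psd}(H\otimes K)$ is a closed convex cone, it then contains the whole minimal tensor product. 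This is precisely the classical statement that separable operators are positive semidefinite, so the inclusion holds and $\mathcal O$ satisfies both standing hypotheses of this section.

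With the hypotheses verified, \Cref{thm:tens} yields a self-dual tensor product on ${\tt Sys}_{\rm pr}(\mathcal O)$ lying between $\barbelow\otimes$ and $\barabove\otimes$. Finally I would invoke the identification, recorded in the text at the start of this section, that ${\tt Sys}_{\rm pr}(\mathcal O)$ is exactly the category ${\tt FAOS}$ of finite-dimensional abstract operator systems. Transporting the tensor product along this identification gives the claimed self-dual functorial tensor product on ${\tt FAOS}$, completing the argument.

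I do not expect a genuine obstacle here: the statement is a direct specialization of \Cref{thm:tens}, just as \Cref{cor:ctp} is. The only point that requires any care is the verification of $A(c)\barbelow\otimes A(d)\subseteq A(c\otimes d)$ for the operator stem, and even this reduces to the standard separability-implies-positivity fact; all the substantive work has already been carried out in \Cref{thm:dual} and \Cref{thm:tens}.
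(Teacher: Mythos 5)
Your proposal is correct and follows exactly the paper's route: the paper likewise proves \Cref{cor:otp} by identifying ${\tt Sys}_{\rm pr}(\mathcal O)$ with ${\tt FAOS}$ and invoking \Cref{thm:tens}. Your explicit verification of the section's standing hypotheses for the operator stem (properness of ${\rm Psd}(H)$ and ${\rm Psd}(H)\barbelow\otimes{\rm Psd}(K)\subseteq{\rm Psd}(H\otimes K)$) is a careful addition that the paper leaves implicit, but it does not change the argument.
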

\begin{proof}
For the operator stem $\mathcal O$ we obtain ${\tt Sys}_{\rm pr}(\mathcal O)={\tt FAOS},$ so the statement follows from \Cref{thm:tens}.
\end{proof}

\section{Characterizing All Tensor Products of Cone Systems}\label{sec:tens}

Tensor products of proper $\mathcal S$-systems (in the sense of \Cref{def:tp}) correspond to  $\widetilde S$-systems on $\mathbb R,$ that assign only proper cones. From the assumption that all intrinsic cones are proper, this properness is equivalent to the cone at base level being $\mathbb R_{\geqslant 0}$ or $\mathbb R_{\leqslant 0}$. However, the notion of tensor products from \cite{vn, kavruk} is slightly different. First, tensor products from \cite{vn,kavruk} are not required to be functorial w.r.t.\ completely positive maps in general, and explicitly called \emph{functorial} if they are. Since functoriality is a very important and  useful property, and most known tensor products of cones and operator systems are actually functorial, it is not a big deficit that our approach only covers functorial tensor products. Second, tensor products in the sense of \cite{vn,kavruk} are supposed to contain all minimal tensor products of the pairwise level cones of the systems. This is not automatic for tensor products in the sense of \Cref{def:tp}, or equivalently for $\widetilde{\mathcal S}$-systems on $\mathbb R.$ First, it clearly requires the $\widetilde{\mathcal S}$-systems to have $\mathbb R_{\geqslant 0}$ at base level. But even this assumption (which is very reasonable)  is not enough in general. For example, the minimal $\widetilde{\mathcal S}$-system $\underline{\mathcal G}_{\mathbb R_{\geqslant 0}}$ over $\mathbb R_{\geqslant 0}$  gives rise to the following tensor product: $$\mathcal G\otimes\mathcal H\coloneqq \underline{\mathcal G}_{\mathcal G(1)\barbelow\otimes \mathcal H(1)},$$ i.e.\ the minimal system over the minimal tensor product over the respective base cones. This is indeed a tensor product in the sense of our \Cref{def:tp}, but not one in the sense of  \cite{vn, kavruk}, since it is too small in general.
Third, tensor products in the sense of \cite{vn,kavruk} have to fulfill a certain quasi-functoriality, i.e.\ cp maps to intrinsic systems have to tensor to  cp maps  into the corresponding  product intrinsic system. In our setup, the maximal $\widetilde{\mathcal S}$-system $\overline{\mathcal G}_{\mathbb R_{\geqslant 0}}$ over $\mathbb R_{\geqslant 0}$ gives rise to the following tensor product: $$\mathcal G\otimes\mathcal H\coloneqq \overline{\mathcal G}_{\mathcal G(\mathbf 1^*)\barabove\otimes\mathcal H(\mathbf 1^*)},$$ which is larger than the maximal tensor product in the sense of \cite{vn,kavruk} in general. 
Summarizing, the functorial tensor products in the sense of 
\cite{vn,kavruk} are precisely those tensor products in the sense of \Cref{def:tp} (or those  $\widetilde{\mathcal S}$-systems over $\mathbb R_{\geqslant 0}$) that lie in between $\barbelow\otimes$ and $\barabove\otimes.$

The following result describes how all tensor products arise from the minimal and maximal tensor product.
\begin{theorem}\label{thm:ctp}
All tensor products of proper $\mathcal S$-systems (in the sense of \Cref{def:tp}) arise in the following way: Take an index set $I$, and  for every $i\in I$ choose  $(\mathcal D_i,V_i),(\mathcal E_i,W_i)\in{\tt Sys}_{\rm pr}(\mathcal S), c_i\in {\tt C}$ and $x_i\in V_i\otimes W_i\otimes\mathcal S(c_i).$ Then define for any $\mathcal G,\mathcal H\in {\tt Sys}_{\rm pr}(\mathcal S)$ and $c\in{\tt C}$ $$a\in (\mathcal G\otimes\mathcal H)(c)\Leftrightarrow \forall i \colon\ x_i\otimes a \in (\mathcal D_i\barbelow\otimes \mathcal G)(\mathbf 1^*)\barabove\otimes (\mathcal E_i\barbelow\otimes \mathcal H)(\mathbf 1^*)\barabove\otimes  A(c_i\otimes c).$$
The  tensor product contains $\barbelow\otimes$ if  all $x_i\in(\mathcal D_i\barabove\otimes\mathcal E_i)(c_i).$ The  tensor products that are additionally  contained in $\barabove\otimes$, i.e.\  functorial tensor products in the sense of \cite{vn, kavruk},   are clearly obtained by intersecting with $\barabove\otimes$.
\end{theorem}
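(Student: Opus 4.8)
The plan is to transport the whole question to the threefold stem $\widetilde{\mathcal S}=\mathcal S'\otimes\mathcal S'\otimes\mathcal S$ used in the proof of \Cref{thm:tens}, where tensor products of proper $\mathcal S$-systems correspond bijectively to $\widetilde{\mathcal S}$-systems $\mathcal T$ on $\mathbb R$ assigning only proper cones, via $(\mathcal G\otimes\mathcal H)(c)=\mathcal T(\mathcal G,\mathcal H,c)$. Recall that an element at level $(\mathcal G,\mathcal H,c)$ lives in $\mathbb R\otimes\widetilde{\mathcal S}(\mathcal G,\mathcal H,c)=X\otimes Y\otimes\mathcal S(c)$, so the task reduces to describing all such $\mathcal T$. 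First I would invoke the fact recalled in the preliminaries that every $\widetilde{\mathcal S}$-system is an intersection of systems admitting a concrete realization, so it suffices to describe the concretely realized ones and then intersect.

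A $\widetilde{\mathcal S}$-system on $\mathbb R$ concretely realized by a linear map $\psi\colon\mathbb R\to\widetilde{\mathcal S}(\mathcal D_i,\mathcal E_i,c_i)=V_i\otimes W_i\otimes\mathcal S(c_i)$ is, by biduality \cite[Theorem 3.17]{vn}, precisely $\mathcal W_{x_i}^\vee$ for $x_i:=\psi(1)$; thus the data $(\mathcal D_i,\mathcal E_i,c_i,x_i)_{i\in I}$ is exactly what is required, and $\mathcal T=\bigcap_i\mathcal W_{x_i}^\vee$. By definition of concrete realization, $a\in\mathcal W_{x_i}^\vee(\mathcal G,\mathcal H,c)$ holds iff $(\psi\otimes{\rm id})(a)=x_i\otimes a$ lies in the intrinsic cone $\mathcal A_{(\mathcal D_i,\mathcal E_i,c_i)}(\mathcal G,\mathcal H,c)=A\big((\mathcal D_i,\mathcal E_i,c_i)\otimes(\mathcal G,\mathcal H,c)\big)$. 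Since the tensor product in the base category is componentwise, with the minimal tensor product in each ${\tt Sys}_{\rm pr}(\mathcal S)$-factor, this product object equals $(\mathcal D_i\barbelow\otimes\mathcal G,\ \mathcal E_i\barbelow\otimes\mathcal H,\ c_i\otimes c)$. Now I would apply the computation of (co)intrinsic cones of product stems \cite[Section 4.2]{vn}: the intrinsic cone of $\widetilde{\mathcal S}$ at a product object is the \emph{maximal} tensor product of the three factor intrinsic cones, while for the system stem $\mathcal S'$ the intrinsic cone on $(\mathcal F,Z)$ is $\mathcal F(\mathbf 1^*)$. This turns the membership into $x_i\otimes a\in(\mathcal D_i\barbelow\otimes\mathcal G)(\mathbf 1^*)\barabove\otimes(\mathcal E_i\barbelow\otimes\mathcal H)(\mathbf 1^*)\barabove\otimes A(c_i\otimes c)$, and intersecting over $i$ yields the displayed equivalence.

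For the first addendum I would argue by duality. Writing $\mathcal T_{\min}$ for the $\widetilde{\mathcal S}$-system of $\barbelow\otimes$, the constructed tensor product contains $\barbelow\otimes$ iff $\mathcal T_{\min}\subseteq\mathcal W_{x_i}^\vee$ for every $i$; dualizing, using biduality and $\mathcal T_{\min}^\vee=\mathcal T_{\max}$, this is equivalent to $\mathcal W_{x_i}\subseteq\mathcal T_{\max}$, and since $\mathcal W_{x_i}$ is the smallest system containing $x_i$ at level $(\mathcal D_i,\mathcal E_i,c_i)$, to $x_i\in\mathcal T_{\max}(\mathcal D_i,\mathcal E_i,c_i)=(\mathcal D_i\barabove\otimes\mathcal E_i)(c_i)$, as claimed. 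The second addendum is then immediate from the discussion preceding the theorem: the functorial tensor products in the sense of \cite{vn,kavruk} are exactly those lying between $\barbelow\otimes$ and $\barabove\otimes$, hence are obtained from the above construction by intersecting with $\barabove\otimes$.

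I expect the main obstacle to be the bookkeeping at the level of intrinsic cones: one must pin down, with the correct min/max assignment, that the intrinsic cone of $\widetilde{\mathcal S}$ at $(\mathcal D_i\barbelow\otimes\mathcal G,\ \mathcal E_i\barbelow\otimes\mathcal H,\ c_i\otimes c)$ is indeed $(\mathcal D_i\barbelow\otimes\mathcal G)(\mathbf 1^*)\barabove\otimes(\mathcal E_i\barbelow\otimes\mathcal H)(\mathbf 1^*)\barabove\otimes A(c_i\otimes c)$ — in particular that the inner factors are minimal tensor products evaluated at $\mathbf 1^*$ whereas the outer combination is maximal. For the forward direction properness is automatic, since one starts from a genuine tensor product and merely rewrites it as $\bigcap_i\mathcal W_{x_i}^\vee$; the only remaining care, for reading the prescription as a construction, is to confirm that the resulting $\mathcal T$ assigns only proper cones so that it corresponds to a tensor product in the sense of \Cref{def:tp}.
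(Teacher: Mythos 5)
Your proposal is correct and follows essentially the same route as the paper's proof: pass to the threefold stem $\widetilde{\mathcal S}$, decompose the corresponding $\widetilde{\mathcal S}$-system on $\mathbb R$ into concretely realized systems via the realization theorem, identify the intrinsic cones of the product stem as the maximal tensor products appearing on the right-hand side, and handle the $\barbelow\otimes$-containment addendum by dualizing ($\mathcal T_{\min}\subseteq\mathcal W_{x_i}^\vee$ iff $\mathcal W_{x_i}\subseteq\mathcal T_{\max}$), which is exactly the paper's duality argument. You even flag the same properness caveat the paper addresses in its parenthetical remark (that the $x_i$ must lie in $\pm\,\mathcal D_i(\mathbf 1^*)\barabove\otimes\mathcal E_i(\mathbf 1^*)\barabove\otimes A(c_i)$ to get a system over $\mathbb R_{\geqslant 0}$ or $\mathbb R_{\leqslant 0}$).
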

 \begin{proof} We have seen that the tensor products  in the sense of \Cref{def:tp} are in one-to-one correspondence with $\widetilde{\mathcal S}$-systems $\mathcal T$  over the base cones $\mathbb R_{\geqslant 0}$ or $\mathbb R_{\leqslant 0}$. We now  apply the realization theorem \cite[Theorem 3.22]{vn} to these systems. Indeed, any concretely realized system on $\mathbb R$ is induced by  a linear map $$\psi_i\colon \mathbb R\to \widetilde{\mathcal S}(\mathcal D_i,\mathcal E_i,c_i)=V_i\otimes W_i\otimes\mathcal S(c_i).$$ With $x_i\coloneqq \psi_i(1)$ and the observation that the intrinsic $\widetilde{\mathcal S}$-cones are precisely the ones on the right in the statement of the theorem, this proves the claim.  (To be precise, to actually obtain a system over $\mathbb R_{\geqslant 0}$ or $\mathbb R_{\leqslant 0}$, i.e.\ a tensor product of proper systems, the $x_i$ all have to be chosen from the intrinsic cones $\pm\ {\mathcal D}_i(\mathbf 1^*)\barabove\otimes{\mathcal E}_i(\mathbf 1^*)\barabove\otimes A(c_i)$.)

 An $\widetilde{\mathcal S}$-system $\mathcal T$ contains $\barbelow\otimes$   if and only if ${\mathcal T}^\vee \subseteq \barabove\otimes.$ If $\mathcal T$ is contained in a system with a concrete realization, the system generated by the realizing map is contained in the dual of $\mathcal T$ and thus in $\barabove\otimes$. In our case, it is easily checked that this means $x_i\in (\mathcal D_i\barabove\otimes\mathcal E_i)(c_i)$ for all $i$.
\end{proof}

 \begin{corollary}
 For each functorial tensor product $\otimes$ of proper cones with $\mathbb R_{\geqslant 0}\otimes\mathbb R_{\geqslant 0}=\mathbb R_{\geqslant 0}$ there exist  proper cones  $(D_i,V_i),(E_i,W_i)\in{\tt PCone}$  and $x_i\in D_i\barabove\otimes E_i$ such that  for all $(G,X),(H,Y)\in{\tt PCone}$ we have $$G\otimes H=\left\{a\in X\otimes Y\mid x_i\otimes a \in (D_i\barbelow\otimes G)\barabove\otimes(E_i\barbelow\otimes H) \mbox{ for all } i  \right\}.$$ 
\end{corollary}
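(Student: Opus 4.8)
The plan is to read off the corollary from \Cref{thm:ctp} by specialising the stem $\mathcal S$ to the simplex stem $\mathcal F\colon{\tt SCone}\to{\tt FVec}$, for which ${\tt Sys}_{\rm pr}(\mathcal F)={\tt PCone}$, and then translating its multi-level statement into the single cone-level formula of the corollary. First I would fix the simplex-stem dictionary that I will use throughout: $\mathcal F(c)$ is the underlying space of the simplex cone $c$, both canonical cones coincide with the cone itself, $A(c)=B(c)=c$, the unit and its dual both reduce to $\Rnn$ so that $\mathbf 1\cong\mathbf 1^*\cong\Rnn$, and — crucially — an $\mathcal F$-system is completely determined by its cone at the first level. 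In particular $(\mathcal G\otimes\mathcal H)(\mathbf 1)=G\otimes H$, so describing this one level suffices, and the evaluations occurring in \Cref{thm:ctp} collapse to $(\mathcal D_i\barbelow\otimes\mathcal G)(\mathbf 1^*)=D_i\barbelow\otimes G$ and $(\mathcal E_i\barbelow\otimes\mathcal H)(\mathbf 1^*)=E_i\barbelow\otimes H$.

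Next I would apply \Cref{thm:ctp} to a functorial tensor product $\otimes$ in the sense of \cite{vn,kavruk}, i.e.\ one lying between $\barbelow\otimes$ and $\barabove\otimes$; here the extra normalisation $\Rnn\otimes\Rnn=\Rnn$ selects the realisation over the base cone $\Rnn$ (the $+$ sign in the proof of \Cref{thm:ctp}). Since $\otimes$ contains $\barbelow\otimes$, the realising data returned by the theorem satisfies $x_i\in(\mathcal D_i\barabove\otimes\mathcal E_i)(c_i)$. Evaluating at the free level $c=\mathbf 1$ and using $A(c_i\otimes\mathbf 1)=A(c_i)=c_i$, the defining condition of \Cref{thm:ctp} becomes
$$x_i\otimes a\in(D_i\barbelow\otimes G)\barabove\otimes(E_i\barbelow\otimes H)\barabove\otimes c_i.$$
The one remaining task is to remove the simplex factor $c_i$. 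Identifying $c_i\cong\Rnn^{\,n_i}$ via its generators $e_1,\ldots,e_{n_i}$ and writing $K:=(D_i\barbelow\otimes G)\barabove\otimes(E_i\barbelow\otimes H)$, I would use that tensoring with a simplex cone is unambiguous, so that $K\barabove\otimes c_i=K\barbelow\otimes c_i\cong\bigoplus_{j=1}^{n_i}K$, the $n_i$-fold direct product of cones. Expanding $x_i=\sum_{j=1}^{n_i}y_{ij}\otimes e_j$ with $y_{ij}\in V_i\otimes W_i$, the element $x_i\otimes a$ decomposes as $(y_{ij}\otimes a)_j$ under this direct-product splitting, so membership in $K\barabove\otimes c_i$ is equivalent to $y_{ij}\otimes a\in K$ for all $j$. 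Splitting each index $i$ into the finitely many indices $(i,j)$ and setting $x_{(i,j)}:=y_{ij}$, $D_{(i,j)}:=D_i$, $E_{(i,j)}:=E_i$, then yields exactly the membership condition asserted in the corollary.

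Applying the identical direct-product decomposition to $(\mathcal D_i\barabove\otimes\mathcal E_i)(c_i)=(D_i\barabove\otimes E_i)\barabove\otimes c_i\cong\bigoplus_{j}(D_i\barabove\otimes E_i)$ converts the containment condition $x_i\in(\mathcal D_i\barabove\otimes\mathcal E_i)(c_i)$ into $x_{(i,j)}\in D_i\barabove\otimes E_i$, which is the hypothesis $x_i\in D_i\barabove\otimes E_i$ stated in the corollary. I expect this index-splitting to be the only genuine obstacle; everything else is bookkeeping on top of \Cref{thm:ctp}. The step rests on the special behaviour of simplex cones, namely that the minimal and maximal tensor product coincide when one factor is a simplex cone and that the result is a finite direct sum of copies; this I would justify from the simplex-stem identities $A(c)=B(c)=c$ together with ${\rm int}(A(c))\barbelow\otimes{\rm int}(B(d))\subseteq{\rm int}(A(c\otimes d))$, or cite it as a classical fact for simplex cones. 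I would finally verify that the canonical flip identifying $V_i\otimes W_i\otimes\mathcal F(c_i)\otimes X\otimes Y$ with $(V_i\otimes X)\otimes(W_i\otimes Y)\otimes\mathcal F(c_i)$ respects this decomposition, so that the $j$-th component of $x_i\otimes a$ is genuinely $y_{ij}\otimes a$.
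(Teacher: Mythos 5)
Your proposal is correct and follows essentially the same route as the paper: specialize \Cref{thm:ctp} to the simplex stem, use that simplex systems are determined by their base-level cone, and use that the realizing data of a tensor product containing $\barbelow\otimes$ satisfies $x_i\in(\mathcal D_i\barabove\otimes\mathcal E_i)(c_i)$. The only difference is one of explicitness: the paper compresses the elimination of the levels $c_i$ into the remark that simplex systems are determined at base level, whereas you spell out the underlying mechanism (for a simplex cone $c_i$ the minimal and maximal tensor products coincide and give a direct sum of copies, so each index $i$ splits into indices $(i,j)$ with $x_{(i,j)}=y_{ij}$), which is exactly the bookkeeping the paper leaves implicit.
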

\begin{proof}
Apply \Cref{thm:ctp} to proper simplex systems, where all systems are determined by their cones at base level. 
Here the minimal $\widetilde{\mathcal S}$-system over $\mathbb R_{\geqslant 0}$ really corresponds to the minimal tensor product.
\end{proof}

\begin{corollary}
Let $\otimes$ be a tensor product of finite dimensional abstract operator systems (in the sense of \Cref{def:tp}). Then there exist finite dimensional abstract operator systems  $(\mathcal D_i,V_i),(\mathcal E_i,W_i)\in{\tt FAOS}, n_i\in \mathbb N$ and $x_i\in \pm\mathcal D_i(1)\barabove \otimes \mathcal E_i(1)\barabove\otimes {\rm Psd}_{n_i}(\mathbb C),$ such that for all $\mathcal G,\mathcal H\in {\tt FAOS}, n\in\mathbb N$ we have $$a\in (\mathcal G\otimes\mathcal H)(n)\Leftrightarrow \forall i \colon\ x_i\otimes a \in (\mathcal D_i\barbelow\otimes \mathcal G)(1)\barabove\otimes (\mathcal E_i\barbelow\otimes \mathcal H)(1)\barabove\otimes  {\rm Psd}_{n_i n}(\mathbb C).$$ The  tensor product contains $\barbelow\otimes$ if all $x_i\in(\mathcal D_i\barabove\otimes\mathcal E_i)(n_i).$
\end{corollary}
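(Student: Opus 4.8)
The plan is to obtain this corollary as a direct specialization of \Cref{thm:ctp} to the operator stem $\mathcal O\colon{\tt FHilb}^{\rm op}\to{\tt FVec}$, with the only real work being the translation of the abstract cone-system notation into the concrete language of operator systems. Recall from the preliminaries that ${\tt Sys}_{\rm pr}(\mathcal O)={\tt FAOS}$, so a tensor product of finite-dimensional abstract operator systems in the sense of \Cref{def:tp} is precisely a tensor product of proper $\mathcal O$-systems, and \Cref{thm:ctp} applies to it without any modification. Thus it suffices to read off what the general formula says for this particular stem.

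First I would set up the dictionary between the two settings. An object $c\in{\tt C}={\tt FHilb}^{\rm op}$ is a finite-dimensional Hilbert space, which I parametrize by its dimension: writing $c_i$ for the Hilbert space $\mathbb C^{n_i}$, the stem yields $\mathcal S(c_i)=\mathbb B(\mathbb C^{n_i})_{\rm her}={\rm Her}_{n_i}(\mathbb C)$, so the elements $x_i\in V_i\otimes W_i\otimes\mathcal S(c_i)$ chosen in \Cref{thm:ctp} become elements of $V_i\otimes W_i\otimes{\rm Her}_{n_i}(\mathbb C)$. As already recorded in the preliminaries, the intrinsic cone of the operator stem is the cone of positive semidefinite operators, $A(c_i)={\rm Psd}_{n_i}(\mathbb C)$. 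The key identifications to get right are those of the relevant levels. The monoidal unit $\mathbf 1$ of ${\tt FHilb}^{\rm op}$ is the one-dimensional Hilbert space $\mathbb C$, which is self-dual, so both $\mathbf 1$ and $\mathbf 1^*$ correspond to the scalar level $n=1$, and in particular $\mathcal D_i(\mathbf 1^*)=\mathcal D_i(1)$, $\mathcal E_i(\mathbf 1^*)=\mathcal E_i(1)$. Moreover the monoidal product in ${\tt FHilb}^{\rm op}$ is the Hilbert-space tensor product, so $c_i\otimes c$ corresponds to $\mathbb C^{n_i}\otimes\mathbb C^{n}=\mathbb C^{n_in}$, and hence $A(c_i\otimes c)={\rm Psd}_{n_in}(\mathbb C)$.

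With this dictionary in hand, the statement follows by substitution. Feeding $\mathcal S(c_i)={\rm Her}_{n_i}(\mathbb C)$, $A(c_i)={\rm Psd}_{n_i}(\mathbb C)$, $\mathbf 1^*\leftrightarrow 1$, and $A(c_i\otimes c)={\rm Psd}_{n_in}(\mathbb C)$ into \Cref{thm:ctp} turns the parenthetical admissibility constraint $x_i\in\pm\,\mathcal D_i(\mathbf 1^*)\barabove\otimes\mathcal E_i(\mathbf 1^*)\barabove\otimes A(c_i)$ into $x_i\in\pm\,\mathcal D_i(1)\barabove\otimes\mathcal E_i(1)\barabove\otimes{\rm Psd}_{n_i}(\mathbb C)$, turns the defining equivalence into exactly the displayed membership condition on $(\mathcal G\otimes\mathcal H)(n)$, and turns the sufficient condition $x_i\in(\mathcal D_i\barabove\otimes\mathcal E_i)(c_i)$ for containing $\barbelow\otimes$ into $x_i\in(\mathcal D_i\barabove\otimes\mathcal E_i)(n_i)$. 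Since this is a pure specialization, there is no genuine obstacle beyond bookkeeping; the one point demanding care is the level translation described above --- in particular the collapse of both $\mathbf 1$ and $\mathbf 1^*$ to the scalar level $1$ --- as a slip there would misplace the level indices on the cones $\mathcal D_i,\mathcal E_i$ and on the ${\rm Psd}$ factors.
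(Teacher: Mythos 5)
Your proposal is correct and matches the paper's own proof, which is literally the one-line observation that the corollary is \Cref{thm:ctp} specialized to the operator stem. Your careful spelling-out of the dictionary (levels indexed by Hilbert-space dimension, $A(c_i)={\rm Psd}_{n_i}(\mathbb C)$, the collapse of $\mathbf 1$ and $\mathbf 1^*$ to level $1$, and $A(c_i\otimes c)={\rm Psd}_{n_in}(\mathbb C)$) is exactly the bookkeeping the paper leaves implicit.
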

\begin{proof} This is just \Cref{thm:ctp} for the operator stem.
\end{proof}

\bibliographystyle{abbrvnat}
\bibliography{references}
\end{document}